\documentclass[10pt, a4paper]{article}

\usepackage[round, authoryear]{natbib}
\bibliographystyle{abbrvnat}
\usepackage{amssymb,amsmath,amsthm,amsfonts, mathtools, bbm}
\usepackage{caption, subcaption, graphbox}
\usepackage{tikz}
\usepackage{comment}
\newtheorem{theorem}{Theorem}[section]
\newtheorem{lemma}[theorem]{Lemma}

\newcommand{\dx}{\mathrm{d}}
\setlength{\marginparwidth}{2cm}
\usepackage{xargs}
\usepackage{indentfirst}
\usepackage{txfonts}
\usepackage{authblk}
%% The amsthm package provides extended theorem environments
%% \usepackage{amsthm}

%% The lineno packages adds line numbers. Start line numbering with
%% \begin{linenumbers}, end it with \end{linenumbers}. Or switch it on
%% for the whole article with \linenumbers.
%% \usepackage{lineno}

\usepackage[twoside,
  paperwidth=210mm,
  paperheight=297mm,
  textheight=622pt,
  textwidth=468pt,
  centering,
  headheight=50pt,
  headsep=12pt,
  footskip=18pt,
  footnotesep=24pt plus 2pt minus 12pt,
  columnsep=2pc]{geometry}
\usepackage{hyperref}
\begin{document}

%% Title, authors and addresses

%% use the tnoteref command within \title for footnotes;
%% use the tnotetext command for theassociated footnote;
%% use the fnref command within \author or \affiliation for footnotes;
%% use the fntext command for theassociated footnote;
%% use the corref command within \author for corresponding author footnotes;
%% use the cortext command for theassociated footnote;
%% use the ead command for the email address,
%% and the form \ead[url] for the home page:
%% \title{Title\tnoteref{label1}}
%% \tnotetext[label1]{}
%% \author{Name\corref{cor1}\fnref{label2}}
%% \ead{email address}
%% \ead[url]{home page}
%% \fntext[label2]{}
%% \cortext[cor1]{}
%% \affiliation{organization={},
%%            addressline={},
%%            city={},
%%            postcode={},
%%            state={},
%%            country={}}
%% \fntext[label3]{}

\title{Exact results for the order picking time distribution under return routing}

%% use optional labels to link authors explicitly to addresses:
%% \author[label1,label2]{}
%% \affiliation[label1]{organization={},
%%             addressline={},
%%             city={},
%%             postcode={},
%%             state={},
%%             country={}}
%%
%% \affiliation[label2]{organization={},
%%             addressline={},
%%             city={},
%%             postcode={},
%%             state={},
%%             country={}}

% \author[label1, label2]{Tim Engels\fnref{label1}, Ivo Adan\fnref{label2}, Onno Boxma\fnref{label1} and Jacques Resing\fnref{label1}}

% \affiliation[label1]{organization={Eindhoven University of Technology},%Department and Organization
%             addressline={De Groene Loper 2},
%             city={Eindhoven},
%             postcode={},
%             state={},
%             country={}}

% \affiliation[label2]{organization={Eindhoven University of Technology},%Department and Organization
%             addressline={De Groene Loper 2},
%             city={Eindhoven},
%             postcode={},
%             state={},
%             country={}}

\author[1]{Tim Engels}
\author[2]{Ivo Adan}
\author[1]{Onno Boxma}
\author[1]{Jacques Resing}
\affil[1]{Department of Mathematics and Computer Science, Eindhoven University of Technology}

\affil[2]{Department of Industrial Engineering and Innovation Sciences, Eindhoven University of Technology}
\maketitle

\begin{abstract}
%% Text of abstract
This paper derives exact expressions for the Laplace-Stieltjes transform of the order picking time in single- and 2-block warehouses. We consider manual warehouses that deploy return routing and assume that order sizes follow a Poisson distribution. The results in this paper apply to a wide range of storage policies, including but not restricted to class-based and random storage. Furthermore, we compare the performance of the storage policies and warehouse lay-outs by using numerical inversion of the Laplace-Stieltjes transforms.
\end{abstract}

%% \linenumbers

%% main text

% \section*{Meaning of todonotes:}
% \noindent\textcolor{writing}{This color notes are for places where I am yet to improve the writing.}\\
% \textcolor{doublecheck}{This color notes are for places I need to double check the math.}\\
% \textcolor{toadd}{At these places I am planning to add these things.}
% \listoftodos[To do:]
% \newpage

\section{Introduction}
\label{sec:Introduction}
Warehouses have grown to become an essential part of today's supply chains, as is illustrated in \citet{Boysen2019}. With this growth, the analysis and optimization of warehouse design has also become ever more important. In particular, the order picking process, the retrieval of items from a warehouse, stands central in many designs due to its high cost. \citet{Tompkins2011}, for example, estimated that order picking contributes at least $50\%$ of all operating costs.

The analysis and optimization of order picking in itself already is a challenging subject, which is highlighted by the vast amount of literature on the topic, see e.g. the literature review of \citet{Boysen2019}. This complexity stems from the many elements that affect the picking process, such as: the lay-out of warehouses, routing of pickers and storage assignment. On top of that, optimal solutions often are impractical, e.g., optimal routes cause confusion amongst pickers \citep{Gademann2001}. To overcome such practical issues, warehouses often apply straightforward, yet efficient, policies.

The performance evaluation of such policies is also crucial to warehousing design. Literature on this topic varies from simulation studies \citep{Petersen2004} to average route-length approximations \citep{Hall1993}. \citet{Dijkstra2017} derived exact formulas for the average route-length, which were then used to construct optimal class-based storage lay-outs. A different point of view came from \citet{Chew1999}, where the authors proposed to compare order batching policies based on the average order-lead time, i.e. the delay experienced by a customer, rather than the average order picking time. The authors modeled the warehouse as a multi-server queue and used a (two moment) approximation of the expected order-lead time to find the optimal batch size. More recently, \citet{Engels2022} provided exact results for the first two moments of the order picking time for several routing policies. Using similar queueing approximations the authors showed that the policy with the shortest average route-length, but higher second moment, could possibly result in higher average order-lead times.

In this paper we step away from the moments of the order picking time and instead derive exact results for the Laplace-Stieltjes transform (abv. LST) of the order picking time distribution. For this, we analyze both single- and 2-block warehouses that deploy the return routing policy. We assume that the order size follows a Poisson distribution and exploit that the numbers of picks in the different aisles are independent under this assumption. The results in this paper allow for the analysis of other properties of the picking time distribution, such as tail probabilities, and an accurate approximation of the order-lead time distribution \citep[see e.g.][pages 292-300]{Tijms}.

Multi-block warehouses have been discussed extensively in literature, and have proven to be interesting for both practical and theoretical purposes. The substantial literature mostly focuses on the expected order picking time and varies from exact optimization \citep{Theys2010} to simulation studies \citep{Roodbergen2001}. Extensions of the models, as well as queueing theoretical applications of the results, are also well-discussed. Notwithstanding the amount of research, the actual distribution of the order picking time was not yet determined. The design of warehouses, so far, has also been restricted to maximization of the throughput and minimization of the experienced delay. In this work, we extend the research by finding exact expressions for the order picking time distribution in single- and 2-block warehouses. These results can be used to design warehouses more carefully by also considering other performance statistics, e.g. tail probabilities. Additionally, distributional knowledge increases the accuracy of queueing theoretical analyses \citep{Gupta2010}.

This paper is built up as follows. In Section 2 we provide the model descriptions for the single- and 2-block warehouses, as well as a description of the return routing policy. In Section \ref{sec:Results} we derive the LST of the order picking time distribution for single- and 2-block warehouses, under return routing. We also show how to apply these results to two storage policies: random and class-based. In Section \ref{sec:NumRes} we use numerical inversion methods for Laplace Transforms to compare these storage policies as well as the single- and 2-block warehouses.

\begin{comment}
OLD INTRODUCTION SECTIONS
% The analysis and optimization of order picking in itself already is a challenging subject, which is highlighted by the vast amount of literature on the topic, see e.g. the literature review of \citet{DeKoster2007}. This complexity stems from the many elements that affect the picking process, such as: the storage assignment, routing of pickers and lay-out of warehouses. On top of that these elements are dependent: e.g., the optimal choice for routing depends on the storage policy that is used. In practice, there, furthermore, exists a trade-off between practicality and optimality exists. The routing of pickers, for instance, is a solvable case of the traveling salesman problem \citep{H.DonaldRatliff1983}. Yet, optimal routing solutions cause confusion amongst human pickers in real applications and ultimately result in higher order picking times \citep{Gademann2005}. To overcome such practical issues, many warehouses instead apply straightforward, yet efficient, policies. \writing{Reconsider the final few sentences, does not really flow. Also link between this paragraph and next one a bit vague.}

% % \citet{Hall1993} introduced several routing policies and compared their performance based on approximations of the average route-length. Building on this, \citet{Dijkstra2017} analyzed the combined effect of routing and storage. Using exact formulas for the average route-length the authors constructed optimal class-based storage lay-outs. A second, frequently used, performance statistic for the comparison of picking policies is the average order-lead time, i.e. the delay experienced by a customer. \citet{Petersen2004} used a simulation model to find the average order-lead time and analyzed the joint effects of storage, routing and batching. \citet{Chew1999} used a different approach and approximated the average order-lead time by using a (two moment) queueing approximation. More recently, \citet{Engels2022} provided exact results on the first two moments of the order picking time, resulting in accurate approximations of the average order-lead time.

% The comparison of policies requires the use of performance statistics. \citet{Hall1993} used approximations of the average route-length to compare several routing heuristics. Building on this, \citet{Dijkstra2017} derived exact formulas for the average route-length and used these to construct optimal class-based storage lay-outs. A second, frequently used, performance statistic is the average order-lead time, i.e., the delay experienced by a customer. \citet{Petersen2004} used a simulation model to find the average order-lead time and analyzed the joint effects of storage, routing and batching. \citet{Chew1999} used a different approach and approximated the average order-lead time by using a (two moment) queueing approximation. More recently, \citet{Engels2022} provided exact results on the first two moments of the order picking time, resulting in accurate approximations of the average order-lead time.

% % The comparison of policies are based on the used performance statistics. In the context of routing, literature often maximizes the throughput of a warehouse by minimizing the average route-length. \citet{Hall1993}, for instance, introduced several routing policies and compared them based on approximations, while \citet{Dijkstra2017} analyzed the joint effect of storage and routing and found optimal storage lay-outs based on exact results. A second widely used performance statistic is the average order-lead time, i.e. the delay experiences by a customer. For this analysis, \citet{Petersen2004} used a simulation model to estimate the effects of storage, routing and batching. \citet{Chew1999} on the other hand approximated the average order-lead time by using a (two moment) queueing approximation in order to find an optimal batch size. More recently, \citet{Engels2022} provided exact results on the first two moments of the order picking time, giving more accurate approximations of the average order-lead time. In this paper, the authors also highlight that the optimal routing heuristic, as well as warehouse dimensions, may depend on the performance statistic used.
\end{comment}

\section{Warehousing Model}

\label{sec:Model}
We consider single- and 2-block warehouses, consisting of $k$ equally spaced parallel aisles of length $l$ and width $w_a$. We assume that the warehouses consist of a single cross-aisle from which the pickers can enter the aisles. The difference between the single- and 2-block warehouses comes from the cross-aisle placement. In the single-block warehouse the cross-aisle is placed in the front, whereas in the 2-block warehouse, the cross-aisle is placed in the middle and splits the aisles in two sub-aisles (each of length $l/2$), see Figure \ref{fig:ReturnRouting}. In the sequel, we write sub-aisle $(i,j)$ for the sub-aisle in the $i$-th aisle and $j$-th block, that is: sub-aisle $(i,1)$ is the upper half of aisle $i$ in the 2-block warehouse. The depot at which the pickers receive and deliver their orders is located on the left-hand side of the cross-aisle, indicated as I/O in Figure \ref{fig:ReturnRouting}.

We assume that the pickers walk with a fixed speed $v$ and spend a random time $P$ to pick an arbitrary item, where $P$ has LST $\Phi_P(s)$. Furthermore, we assume that the time spent picking an item is independent of the pick locations and other picking times. The order size, $M$, is assumed to follow a Poisson distribution with mean $\lambda$.  For example, this is valid when item demands arrive according to a Poisson process and are batched as a single order after a fixed time.

We assume that the items in an order are independently located throughout the warehouse according to some storage policy. Let $p_{i,j}$ denote the probability that an arbitrary item in an order falls within sub-aisle $(i,j)$ and let $F_{i,j}(x)$ denote the probability that an arbitrary pick in this sub-aisle is within distance $x$ (as a fraction of the sub-aisle length) of the cross-aisle, for $i=1,...,k$ and $j=1,2$. Throughout this paper, we write $p_i$ and $F_i(\cdot)$ in the case of a single-block warehouse. This description allows for both continuous and discrete locations within aisles and represents a wide range of storage policies, e.g., the random and class-based storage policies. Under the former, the item locations in an order are uniformly distributed amongst and within the aisles. In the single-block warehouse this implies $p_i = 1/k$ and $F_i(x) = x$. In Section \ref{sec:Examples} we discuss the random and class-based policy more extensively. In particular, we show that the latter policy results in piece-wise linear $F_i(\cdot)$.

\begin{figure}[h]
    \centering
    \begin{subfigure}{0.45\textwidth}
        \includegraphics[width = \textwidth]{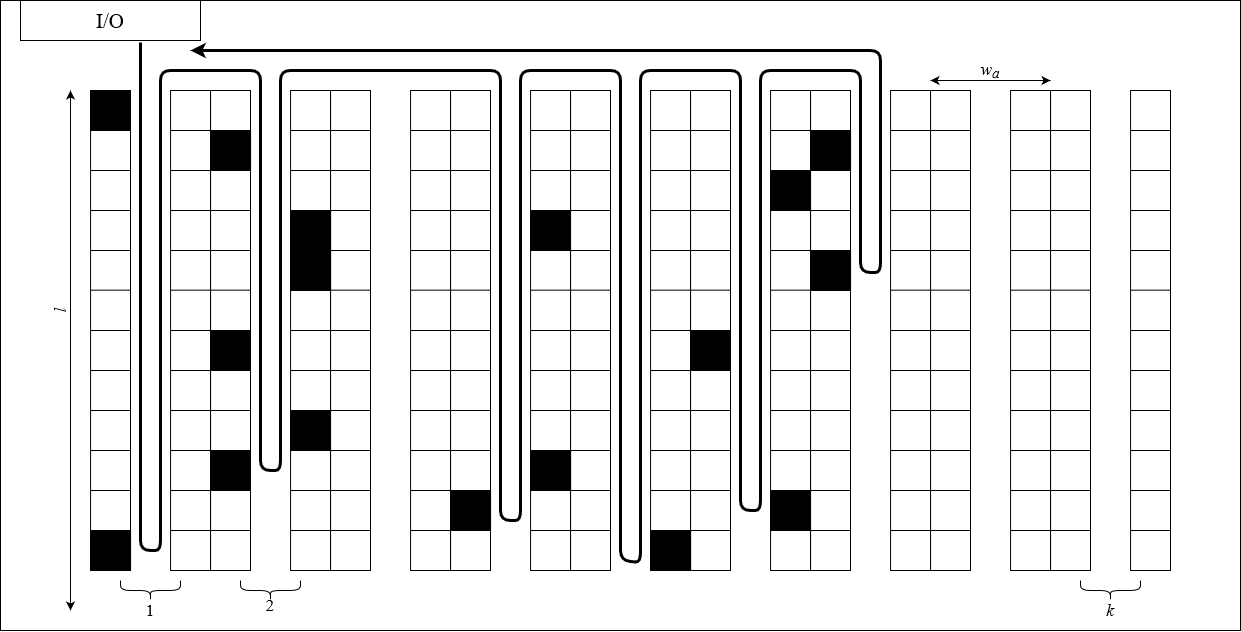}
        \caption{Single-block warehouse.}
        \label{fig:Return}
    \end{subfigure}\hspace{0.05\textwidth}
    \begin{subfigure}{0.45\textwidth}
        \includegraphics[width = \textwidth]{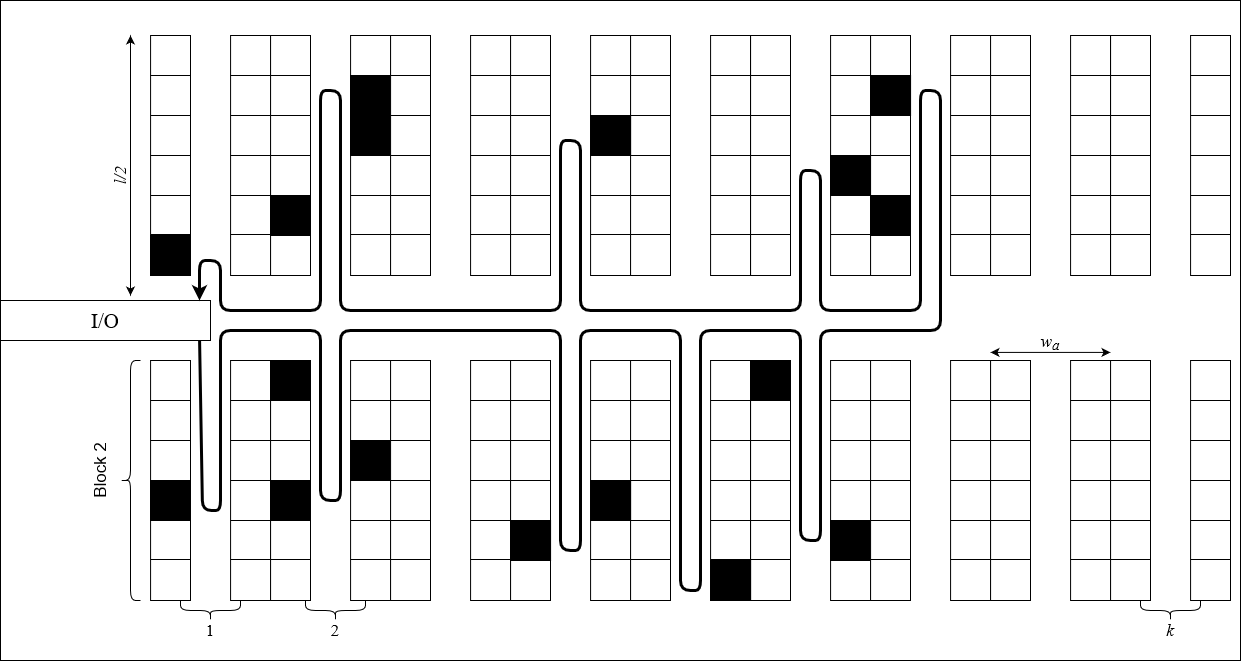}
        \caption{2-block warehouse.}
        \label{fig:Return_2block}
    \end{subfigure}
    \caption{Examples of the return routing policy in both single- and 2-block warehouses.}
    \label{fig:ReturnRouting}
\end{figure}

\subsection{Routing Policy}
In this paper we focus on the return routing policy, where pickers traverse each aisle until the furthest item that has to be picked and then traverse back. In the single-block warehouse this entails that the pickers walk across the front cross-aisle and enter each aisle in which they have to pick items. Within such aisles, the pickers pick all items up until the furthest item and then walk back to the cross-aisle. After all aisles are visited, the picker returns to the depot. An example of this policy is given in Figure \ref{fig:Return}.

We introduce the following notation. Let $N_i$ denote the number of items to pick in aisle $i$ and let $A_i$ be the location of the furthest item in aisle $i$ (as a fraction of the aisle length $l$). Furthermore, we write $k^+$ for the furthest aisle in which items have to be picked. Using this notation, we find that the picking time contribution of aisle $i$, denoted as $T_i$, is given by:
\begin{align}
\label{eq:Ret_Ti}
    T_i = \sum_{j=1}^{N_i} P_j + \frac{2l}{v}A_i + \frac{2w_a}{v}\mathbbm{1}\{i<k^+\}.
\end{align}
This relation consists of 3 different components: the time spent picking items, the traveling time within the aisle and the time spent traveling towards the next aisle. Since $k^+$ is the last aisle in which items have to be picked, the last term disappears when $i = k^+$. For $i>k^+$ the aisle will not have a picking time contribution and hence all terms will equal zero.

Consequently, the total order picking time, $T$, is given by:
\begin{align}
\label{eq:Ret_T_1}
    T = \begin{dcases}\sum_{i=1}^{k^+-1}T_i + T_{k^+} &\text{if } M>0,\\
         0 &\text{else.}
    \end{dcases}
\end{align}
Here, we explicitly distinguish between $T_i$ for $i<k^+$ and $T_{k^+}$ because of the third term in \eqref{eq:Ret_Ti} and the fact that aisle $k^+$ must consist of at least one pick, while the other aisles can still have zero picks.

Return routing in a 2-block warehouse is similar. The pickers walk across the cross-aisle and apply the return routing policy for a single-block warehouse on the lower block up until all items in said block are picked, hence they do not yet return to the I/O point. The pickers than traverse to the furthest (full) aisle with picks and apply the return routing policy on the upper block from right to left. When all items are picked, the pickers deliver their order at the depot-point. An illustration of this policy is given in Figure \ref{fig:Return_2block}.

In line with the notation for the single-block warehouse, we write $k^+$ for the furthest aisle in which items have to be picked, $N_{i,j}$ for the number of items to pick in sub-aisle $(i,j)$, and $A_{i,j}$ for the location of the furthest item in sub-aisle $(i,j)$ (as a fraction of the sub-aisle length $l/2$ and with respect to the cross-aisle). Then $T_i$ is given by:
\begin{align}
\label{eq:Ret2_Ti}
    T_i = \sum_{j=1}^{N_{i,1}+N_{i,2}} P_j + \frac{l}{v}\Big(A_{i,1}+A_{i,2}\Big) + \frac{2w_a}{v} \mathbbm{1}\{i< k^+\}.
\end{align}
Remark that relation \eqref{eq:Ret_T_1} still holds for the 2-block warehouse.

\section{Results}
\label{sec:Results}

In this section we derive the LST of the order picking time distribution in both single- (Section \ref{sec:SingleBlock}) and 2-block warehouses (Section \ref{sec:2Block}). Afterwards, in Section \ref{sec:Examples} we illustrate how these results can be applied to two well-known storage policies. The analysis in this section is based upon the key assumption that the total order size follows a Poisson distribution. Since the number of picks in the different (sub-)aisles follows a multinomial distribution with the order size as number of trials, we have that the numbers of picks in the different aisles are independent under this assumption and follow a Poisson distribution with parameter $p_{i,j}\lambda$.

\subsection{Single-block warehouse}
\label{sec:SingleBlock}
Recall the relation for the order picking time in Equation \eqref{eq:Ret_T_1}, which splits the order picking time in the picking time contribution of the last aisle and the sum of the contributions of the previous aisles. The latter term is a random sum of ($k^+-1$) i.i.d. random variables ($T_i$), hence deriving the distribution of the order picking time requires to derive the distribution of both $k^+$ and $T_i$. We start by remarking that $k^+$ is the first aisle with picks coming from aisle $k$ to aisle $1$. For the aisle contributions $T_i$, we notice that $A_i$ is the furthest pick location in the aisle, i.e., $A_i$ is the maximum of $N_i$ i.i.d. random variables. Therefore we have that: $\mathbb{P}(A_i\leq x\vert N_i = n) = F_i(x)^n$. Using these insights we find the following:

\begin{lemma}
\label{lemma:Ret_mainLemma}
The following equations hold:
\begin{align}
    \label{eq:kplusPGF}
    &\mathbb{P}(k^+ = j) = \exp\Big(-\lambda \sum_{i=j+1}^k p_i\Big)\big[1-\exp(-\lambda p_j)\big] \quad \text{for } j = 1,...,k,\\
    \label{eq:Ret_TiLST}
    &\mathbb{E}[\exp(-sT_{i})\vert k^+>i] = \exp\Big(-\lambda p_i-\frac{2w_as}{v}\Big)\bigg[
    \begin{aligned}[t]
        &\exp\Big(\lambda p_i \Phi_P(s)-\frac{2ls}{v}\Big)\\ &+\frac{2ls}{v}\int_{x=0}^1 \exp\Big(-\frac{2ls}{v}x+\lambda p_i\Phi_P(s)F_i(x)\Big)\dx x\bigg],
    \end{aligned}
\intertext{and}
    \label{eq:Ret_Tk}
     &\mathbb{E}[\exp(-sT_{i})\vert k^+ = i] = \frac{\exp(2w_as/v)\mathbb{E}[\exp(-sT_i)\vert k^+ > i]-\exp(-\lambda p_i)}{1-\exp(-\lambda p_i)}.
\end{align}
\end{lemma}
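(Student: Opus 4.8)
The plan is to lean entirely on the independence structure that the Poisson assumption provides, which is already recorded just before the lemma: the pick counts $N_1,\dots,N_k$ are independent with $N_i\sim\mathrm{Poisson}(\lambda p_i)$, the picking times $P_1,P_2,\dots$ are i.i.d.\ with LST $\Phi_P(s)$ and independent of all pick locations, and $A_i$ is governed solely by $N_i$ together with the within-aisle-$i$ locations through $\mathbb{P}(A_i\le x\mid N_i=n)=F_i(x)^n$. The three identities then come out by (i) a counting argument for $k^+$, (ii) conditioning on $N_i$ and averaging out the Poisson weights for \eqref{eq:Ret_TiLST}, and (iii) a short algebraic manipulation relating the two conditional transforms.

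For \eqref{eq:kplusPGF}: since $k^+$ is, reading from aisle $k$ down to aisle $1$, the first aisle with a pick, the event $\{k^+=j\}$ equals $\{N_j\ge 1\}\cap\{N_{j+1}=\dots=N_k=0\}$. Independence of the $N_i$ gives $\mathbb{P}(k^+=j)=\mathbb{P}(N_j\ge 1)\prod_{i=j+1}^{k}\mathbb{P}(N_i=0)$, and plugging in the Poisson point masses $\mathbb{P}(N_i=0)=e^{-\lambda p_i}$ and $\mathbb{P}(N_j\ge1)=1-e^{-\lambda p_j}$ yields the stated formula.

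For \eqref{eq:Ret_TiLST}: note that $\{k^+>i\}=\{N_{i+1}+\dots+N_k\ge1\}$ is independent of the triple $(N_i,(P_j)_j,A_i)$, and that on this event the indicator in \eqref{eq:Ret_Ti} equals one, so $\mathbb{E}[e^{-sT_i}\mid k^+>i]=e^{-2w_as/v}\,\mathbb{E}\big[\exp(-s\sum_{j=1}^{N_i}P_j-\tfrac{2ls}{v}A_i)\big]$. Conditioning on $N_i=n$ and using independence of picking times and locations, the inner expectation factors as $\Phi_P(s)^{n}\,\mathbb{E}[e^{-(2ls/v)A_i}\mid N_i=n]$; an integration by parts in the Stieltjes integral $\int_{0}^{1}e^{-(2ls/v)x}\,\dx\!\big(F_i(x)^n\big)$ rewrites the second factor as $e^{-2ls/v}+\tfrac{2ls}{v}\int_{0}^{1}e^{-(2ls/v)x}F_i(x)^n\,\dx x$, a formula that also holds for $n=0$ since $F_i(x)^0\equiv 1$. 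Averaging over $N_i$ with the Poisson weights $e^{-\lambda p_i}(\lambda p_i)^n/n!$, interchanging the sum with the $\dx x$-integral, and summing the two exponential series $\sum_n(\lambda p_i\Phi_P(s))^n/n!$ and $\sum_n(\lambda p_i\Phi_P(s)F_i(x))^n/n!$ gives exactly \eqref{eq:Ret_TiLST} after pulling out $\exp(-\lambda p_i-2w_as/v)$.

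For \eqref{eq:Ret_Tk}: on $\{k^+=i\}$ the indicator in \eqref{eq:Ret_Ti} vanishes, and since $\{k^+=i\}=\{N_i\ge1\}\cap\{N_{i+1}=\dots=N_k=0\}$ with the second event independent of $(N_i,(P_j)_j,A_i)$, we have $\mathbb{E}[e^{-sT_i}\mid k^+=i]=\mathbb{E}\big[\exp(-s\sum_{j=1}^{N_i}P_j-\tfrac{2ls}{v}A_i)\,\big|\,N_i\ge1\big]$. Splitting the unconditional expectation $\mathbb{E}\big[\exp(-s\sum_{j=1}^{N_i}P_j-\tfrac{2ls}{v}A_i)\big]$ across $\{N_i=0\}$ (which contributes $e^{-\lambda p_i}$) and $\{N_i\ge1\}$, solving for the conditional term, and substituting $\mathbb{E}\big[\exp(-s\sum_{j=1}^{N_i}P_j-\tfrac{2ls}{v}A_i)\big]=e^{2w_as/v}\,\mathbb{E}[e^{-sT_i}\mid k^+>i]$ from the previous step produces \eqref{eq:Ret_Tk}. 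The only delicate bookkeeping is justifying the Fubini interchange of the infinite sum with the $\dx x$-integral (immediate for $\operatorname{Re}(s)\ge0$, as every summand is dominated in modulus by the corresponding term of a convergent exponential series) and accounting for a possible atom of $F_i$ at $0$ in the integration by parts; both are routine, so I anticipate no substantive obstacle.
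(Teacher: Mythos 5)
Your proof is correct and follows essentially the same route as the paper's: independence of the Poisson counts $N_i$ for \eqref{eq:kplusPGF}, conditioning on $N_i$ plus integration by parts and the Poisson exponential series for \eqref{eq:Ret_TiLST}, and the law of total probability over $\{N_i>0\}$ (using independence of $\{k^+=i\}$, $\{k^+>i\}$ from the aisle-$i$ data) for \eqref{eq:Ret_Tk}. The only, harmless, difference is that you keep the $n=0$ term inside the Poisson sum (valid since $F_i(x)^0\equiv1$ and $A_i=0$ on $\{N_i=0\}$), which avoids the paper's separate treatment of $N_i=0$ and the subsequent cancellation of the $\exp(-2ls/v)$ and $-1$ terms.
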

\begin{proof}
The first statement follows immediately from the given that $k^+$ is the right-most non-empty aisle in the warehouse, combined with the fact that $\mathbb{P}(N_i = 0)=\exp(-\lambda p_i)$.

For \eqref{eq:Ret_TiLST} we distinguish between the cases $N_i = 0$ and $N_i > 0$:
\begin{align}
\label{eq:Ret_mainLemma1}
    \mathbb{E}[\exp(-sT_i)\vert k^+>i] &= \exp\Big(-\lambda p_i-\frac{2w_as}{v}\Big) +  \mathbb{E}[\exp(-sT_i)\mathbbm{1}\{N_i>0\}\vert k^+>i].
\end{align}
For the second term we realize that, conditional on $N_i$, we have by partial integration that:
\[\mathbb{E}[\exp(-sA_i)\vert N_i = n] = \exp(-s) + s\int_{x=0}^1 \mathbb{P}(A_i \leq x\vert N_i = n) \exp(-sx)\dx x.\]
Now using $\mathbb{P}(A_i\leq x\vert N_i = n) = F_i(x)^n$ and by summing over all possible values of $N_i$ we find:
\begin{align*}
    \mathbb{E}[\exp(-sT_i)\mathbbm{1}\{N_i>0\}\vert k^+>i]&= \exp\Big(-\lambda p_i-\frac{2w_as}{v}\Big)\sum_{n=1}^\infty \frac{\lambda^np_i^n}{n!}\Phi_P(s)^n\bigg[\exp\Big(-\frac{2ls}{v}\Big)+\frac{2ls}{v}\int_{x=0}^1 \exp(-\frac{2ls}{v}x)F_i(x)^n\dx x\bigg]\\
    &=\exp\Big(-\lambda p_i-\frac{2w_as}{v}\Big)\bigg[\begin{aligned}[t]
    &\exp\Big(\lambda p_i\Phi_P(s)-\frac{2ls}{v}\Big)-\exp\Big(-\frac{2ls}{v}\Big)\\
    &+\frac{2ls}{v}\int_{x=0}^1 \bigg\{\exp\Big(-\frac{2ls}{v}x+\lambda p_i\Phi_P(s)F_i(x)\Big)-\exp\Big(-\frac{2ls}{v}x\Big)\bigg\}\dx x\bigg]
    \end{aligned}\\
    &=\exp\Big(-\lambda p_i-\frac{2w_as}{v}\Big)\bigg[\begin{aligned}[t]
    &\exp\Big(\lambda p_i\Phi_P(s)-\frac{2ls}{v}\Big)-\exp\Big(-\frac{2ls}{v}\Big)\\
    &+\frac{2ls}{v}\int_{x=0}^1 \exp\Big(-\frac{2ls}{v}x+\lambda p_i\Phi_P(s)F_i(x)\Big)\dx x + \exp\Big(-\frac{2ls}{v}\Big)-1\bigg].
    \end{aligned}
\end{align*}
Remark that the terms $\exp(-2ls/v)$ cancel and that, after substitution in \eqref{eq:Ret_mainLemma1}, the $-1$ term cancels against the term for $N_i=0$, proving \eqref{eq:Ret_TiLST}.

Lastly, for \eqref{eq:Ret_Tk} we note that:
\begin{align*}
    \Big\{T_{i}\Big\vert k^+ =i\Big\} \,&\overset{d}{=} \bigg\{\sum_{m=1}^{N_i}P_m\bigg\vert N_i>0\bigg\} + \frac{2l}{v}\Big\{A_i\Big\vert N_i>0\Big\} \overset{d}{=} \frac{-2w_a}{v}+\Big\{T_i\Big\vert k^+ > i,N_i > 0\Big\}.
\end{align*}
For the LST of $\Big\{T_{i}\Big\vert k^+ =i\Big\}$ we can thus use the distribution on the right-hand side. By then applying the law of total probability on the event $N_i > 0$ we find:
\begin{align*}
     \mathbb{E}\Big[\exp\Big(-sT_i+\frac{2w_as}{v}\Big)\Big\vert k^+>i,N_i>0\Big]
    &=\frac{\exp(2w_as/v)\mathbb{E}[\exp(-sT_i\Big)\Big\vert k^+>i]-\exp(-\lambda p_i)}{1-\exp(-\lambda p_i)}.\qedhere
\end{align*}
\end{proof}

The results in Lemma \ref{lemma:Ret_mainLemma} provide the main ingredients for the LST of the total order picking time. Starting from \eqref{eq:Ret_T_1}, we sum over the possible values of $k^+$. Since the aisle contributions, conditional on $k^+$, are independent (up to aisle $k^+$) we have that the LST is simply the product of the transforms of each aisle contribution. In conclusion we get:

    \begin{theorem}
    \label{thm:Return}
    The total order picking time in a single-block warehouse, under return routing, has the following Laplace-Stieltjes transform:
    \begin{align}
    \label{eq:Return_LST_T}
        \mathbb{E}[\exp(-sT)] = \exp(-\lambda) + \sum_{i=1}^{k} \exp\Big(-\lambda \sum_{m=i+1}^k p_m\Big)\Big[\exp\Big(\frac{2w_as}{v}\Big)\psi_{T_i}(s)-\exp\big(-\lambda p_i\big)\Big]\prod_{l=1}^{i-1} \psi_{T_l}(s),
    \end{align}
    where $ \psi_{T_l}(s) = \mathbb{E}[\exp(-sT_l)\vert k^+>l]$ for $l = 1,2,...k$, given in Lemma \ref{lemma:Ret_mainLemma}.
    \end{theorem}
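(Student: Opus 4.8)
The plan is to condition on the value of $k^+$ and exploit the independence of the aisle contributions, then plug in the formulas from Lemma~\ref{lemma:Ret_mainLemma}. First I would split $\mathbb{E}[\exp(-sT)]$ according to Equation~\eqref{eq:Ret_T_1}: the event $\{M=0\}$ has probability $e^{-\lambda}$ and contributes $e^{-\lambda}$ to the transform because then $T=0$; on $\{M>0\}$ I partition further over the events $\{k^+=i\}$, $i=1,\dots,k$, whose probabilities are exactly $\eqref{eq:kplusPGF}$. Conditionally on $\{k^+=i\}$ we have $T=\sum_{l=1}^{i-1}T_l+T_i$, so it remains to identify the conditional transform of this sum.

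The key structural observation is that conditioning on $\{k^+=i\}$ only constrains aisles $i,\dots,k$ (it says $N_i>0$ and $N_{i+1}=\cdots=N_k=0$) and leaves aisles $1,\dots,i-1$ untouched. Since $N_1,\dots,N_k$ are independent Poisson variables, after this conditioning $N_1,\dots,N_{i-1}$ are still mutually independent and independent of $\{N_i\mid N_i>0\}$. Each $T_l$ is a measurable function of $N_l$ together with the within-aisle locations and the independent picking times $P_j$, so $T_1,\dots,T_{i-1},T_i$ are conditionally independent given $\{k^+=i\}$, and moreover $\{T_l\mid k^+=i\}\overset{d}{=}\{T_l\mid k^+>l\}$ for every $l<i$, since both conditionings impose no restriction on aisle $l$. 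Hence
\[
\mathbb{E}[\exp(-sT)\mid k^+=i]=\Big(\prod_{l=1}^{i-1}\psi_{T_l}(s)\Big)\,\mathbb{E}[\exp(-sT_i)\mid k^+=i],
\]
with the empty product read as $1$ when $i=1$.

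Combining the pieces gives
\[
\mathbb{E}[\exp(-sT)]=e^{-\lambda}+\sum_{i=1}^{k}\mathbb{P}(k^+=i)\,\Big(\prod_{l=1}^{i-1}\psi_{T_l}(s)\Big)\,\mathbb{E}[\exp(-sT_i)\mid k^+=i],
\]
and the proof finishes by substituting $\eqref{eq:kplusPGF}$ and $\eqref{eq:Ret_Tk}$: the factor $1-\exp(-\lambda p_i)$ appearing in $\mathbb{P}(k^+=i)$ cancels the denominator of $\mathbb{E}[\exp(-sT_i)\mid k^+=i]$, so each summand collapses to $\exp\big(-\lambda\sum_{m=i+1}^k p_m\big)\big[\exp(2w_as/v)\,\psi_{T_i}(s)-\exp(-\lambda p_i)\big]\prod_{l=1}^{i-1}\psi_{T_l}(s)$, which is precisely~\eqref{eq:Return_LST_T}.

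I expect the step requiring the most care to be the conditional-independence argument: one must argue cleanly that conditioning on $\{k^+=i\}$ factorizes the joint law of $(T_1,\dots,T_i)$ and that the marginal of each $T_l$ with $l<i$ under this conditioning coincides with its law under $\{k^+>l\}$ (equivalently, with its unconditional law), which is where the Poisson/multinomial independence of the per-aisle pick counts is essential. Everything after that is bookkeeping with the transforms already established in Lemma~\ref{lemma:Ret_mainLemma}.
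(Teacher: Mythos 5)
Your proof is correct and follows essentially the same route as the paper: condition on $\{M=0\}$ and on $\{k^+=i\}$, use the conditional independence of the aisle contributions together with $\{T_l\mid k^+=i\}\overset{d}{=}\{T_l\mid k^+>l\}$ for $l<i$, and substitute \eqref{eq:kplusPGF} and \eqref{eq:Ret_Tk} so that the factor $1-\exp(-\lambda p_i)$ cancels. One minor slip in your closing remark: the law of $T_l$ given $\{k^+>l\}$ is \emph{not} its unconditional law (unconditionally the indicator $\mathbbm{1}\{l<k^+\}$ may vanish and $T_l$ may even be $0$), but this parenthetical is not used in the argument, which stands as written.
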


    % \begin{proof}
    % We sum over the possible values of $k^+$ and consider the case with $M = 0$ separately:
    % \begin{align*}
    %      \mathbb{E}[\exp(-sT)] = \exp(-\lambda) +  \sum_{j=1}^k \mathbb{E}\Big[\exp\Big(-s\sum_{i=1}^{k^+-1}T_i-sT_{k^+}\Big)\vert k^+ = j\Big]\mathbb{P}(k^+ = j).
    % \end{align*}
    % By the independence of the number of picks in each aisle we have that the conditional LST is given by the product of all individual (conditional) LST's. Combining the results in Lemma \ref{lemma:Ret_mainLemma} now gives:
    % \begin{align*}
    %     \mathbb{E}[\exp(-sT)] &= \exp(-\lambda) +\sum_{j=1}^k \exp\Big(-\lambda \sum_{i=j+1}^k p_i\Big)\big[1-\exp(-\lambda p_i)\big]\frac{\exp(2w_as/v)\psi_{T_j}(s)-\exp(-\lambda p_i)}{1-\exp(-\lambda p_i)}\prod_{i=1}^{j-1}\psi_{T_i}(s).\qedhere
    % \end{align*}
    % \end{proof}

\subsection{2-block warehouse}
\label{sec:2Block}
The order picking time, $T$, in the 2-block warehouse still satisfies the relation in \eqref{eq:Ret_T_1}, where $T_i$ is given in \eqref{eq:Ret2_Ti}. The main idea behind Theorem \ref{thm:Return} therefore still applies, resulting in the same relation between the LST of the total order picking time and the aisle contributions. The main difference between the transforms comes from the contribution of each aisle, which we analyze by splitting this contribution into two independent parts: one for each block. This results in Theorem \ref{thm:Return2}.
    \begin{theorem}
    \label{thm:Return2}
   The total order picking time in a 2-block warehouse, under return routing, has the following Laplace-Stieltjes transform:
    \begin{align}
        \mathbb{E}[\exp(-sT)] = \exp(-\lambda) + \sum_{i=1}^{k} \Big[&\exp\Big(\frac{2w_as}{v}\Big)\psi_{T_{i,1}}(s)\psi_{T_{i,2}}(s)-\exp\big(-\lambda (p_{i,1}+p_{i,2})\big)\Big]\\
        &\times\exp\Big(-\lambda \sum_{m=i+1}^k (p_{m,1}+p_{m,2})\Big)\prod_{l=1}^{i-1} \psi_{T_{l,1}}(s)\psi_{T_{l,2}}(s),\nonumber
    \end{align}
    where:
    \begin{align}
    \label{eq:Return2_Ti}
        \psi_{T_{l,j}}(s) = \exp\Big(-\lambda p_{l,j}-\frac{w_as}{v}\Big)\Big[\exp\Big(\lambda p_{l,j} \Phi_P(s)-\frac{ls}{v}\Big) + \frac{ls}{v}\int_{x=0}^1 \exp\Big(-\frac{ls}{v}x+\lambda p_{l,j}\Phi_P(s)F_{l,j}(x)\Big)\dx x\Big],
    \end{align}
    for $l = 1,2,...k$ and $j=1,2$.
    \end{theorem}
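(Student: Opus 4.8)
The plan is to follow the same route as for Theorem~\ref{thm:Return}, since relation~\eqref{eq:Ret_T_1} is unchanged; the only genuinely new work is the per-aisle transform, which now has to be split over the two blocks. First I would record that, under the Poisson assumption, the pick counts $N_{i,j}$ are mutually independent over all $i=1,\dots,k$ and $j=1,2$, with $N_{i,j}\sim\mathrm{Poisson}(\lambda p_{i,j})$; in particular aisle $i$ is non-empty precisely when $N_{i,1}+N_{i,2}>0$, an event of probability $1-\exp(-\lambda(p_{i,1}+p_{i,2}))$. Hence $k^+$, the right-most non-empty aisle, satisfies the analogue of~\eqref{eq:kplusPGF} with $p_i$ replaced by $p_{i,1}+p_{i,2}$, and, conditionally on $k^+=i$, the counts in aisles $1,\dots,i-1$ are unconstrained while those in aisle $i$ are conditioned only on $N_{i,1}+N_{i,2}>0$.

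Next I would decompose the aisle contribution~\eqref{eq:Ret2_Ti} as $T_i=T_{i,1}^\ast+T_{i,2}^\ast+\tfrac{2w_a}{v}\mathbbm{1}\{i<k^+\}$, where $T_{i,j}^\ast$ equals $0$ when $N_{i,j}=0$ and equals the within-sub-aisle picking-plus-travel time $\sum_{m=1}^{N_{i,j}}P_m+\tfrac{l}{v}A_{i,j}$ otherwise (the round trip in a sub-aisle of length $l/2$ costs $\tfrac{l}{v}A_{i,j}$, since $A_{i,j}$ is a fraction of $l/2$). Because the $N_{i,j}$ and the location and pick-time variables are all independent, $T_{i,1}^\ast$ and $T_{i,2}^\ast$ are independent, and each $T_{i,j}^\ast$ is independent of the conditioning event $\{k^+\geq i\}$. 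Repeating the partial-integration-then-sum-over-Poisson computation used for~\eqref{eq:Ret_TiLST} (with $2l$ replaced by $l$ and the $\exp(-2w_as/v)$ factor omitted), the cancellations of the $\exp(-ls/v)$ terms and of the spurious $-1$ against the $N_{i,j}=0$ term go through exactly as before and yield
\[
\mathbb{E}\!\left[\exp(-sT_{i,j}^\ast)\right]=\exp\!\Big(\tfrac{w_as}{v}\Big)\psi_{T_{i,j}}(s),
\]
with $\psi_{T_{i,j}}(s)$ as in~\eqref{eq:Return2_Ti}; the bookkeeping factor $\exp(-w_as/v)$ built into each $\psi_{T_{i,j}}$ is chosen precisely so that the product $\psi_{T_{i,1}}(s)\psi_{T_{i,2}}(s)$ already carries the factor $\exp(-2w_as/v)$ needed for the walk to the next aisle.

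Then I would assemble the transform. Conditioning on $k^+$ and using independence across aisles given $k^+$, the contribution of $\{k^+=i\}$ factors as $\mathbb{P}(k^+=i)$ times $\prod_{l=1}^{i-1}\mathbb{E}[\exp(-sT_l)\mid k^+=i]$ times $\mathbb{E}[\exp(-sT_i)\mid k^+=i]$. For $l<i$ the factor is $\exp(-2w_as/v)\,\mathbb{E}[\exp(-sT_{l,1}^\ast)]\,\mathbb{E}[\exp(-sT_{l,2}^\ast)]=\psi_{T_{l,1}}(s)\psi_{T_{l,2}}(s)$; for the last aisle, since $\{N_{i,1}+N_{i,2}>0\}$ does not factor over the two blocks, I would apply the law of total probability on this event exactly as in the derivation of~\eqref{eq:Ret_Tk}, giving
\[
\mathbb{E}\!\left[\exp(-sT_i)\mid k^+=i\right]=\frac{\exp(2w_as/v)\,\psi_{T_{i,1}}(s)\psi_{T_{i,2}}(s)-\exp(-\lambda(p_{i,1}+p_{i,2}))}{1-\exp(-\lambda(p_{i,1}+p_{i,2}))}.
\]
Multiplying by $\mathbb{P}(k^+=i)=\exp\!\big(-\lambda\sum_{m=i+1}^k(p_{m,1}+p_{m,2})\big)\big[1-\exp(-\lambda(p_{i,1}+p_{i,2}))\big]$ cancels the denominator; summing over $i=1,\dots,k$ and adding $\exp(-\lambda)$ for $M=0$ produces the claimed formula. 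The only delicate points are this $w_a$-bookkeeping and the fact that the conditioning on a non-empty $k^+$-th aisle must be handled jointly rather than block-by-block; everything else is the single-block argument of Lemma~\ref{lemma:Ret_mainLemma} and Theorem~\ref{thm:Return} applied once per block.
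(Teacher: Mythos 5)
Your proposal is correct and follows essentially the same route as the paper: split $T_i$ into two sub-aisle contributions with the $\tfrac{2w_a}{v}$ term halved between them, reuse the single-block computation of Lemma~\ref{lemma:Ret_mainLemma} (paper: by substituting $w_a/2$, $l/2$, $\lambda p_{i,j}$; you: by redoing the partial-integration/Poisson-sum step, which is the same calculation), use independence of the two blocks to get $\psi_{T_{i,1}}\psi_{T_{i,2}}$, and handle the last aisle jointly via the law of total probability on $\{N_{i,1}+N_{i,2}>0\}$ so the denominator cancels against $\mathbb{P}(k^+=i)$. One minor imprecision: $T_{i,j}^\ast$ is independent of $\{k^+>i\}$ but not of $\{k^+\geq i\}$ (the latter involves aisle $i$ itself); this does not harm your argument, since your last-aisle step conditions on $N_{i,1}+N_{i,2}>0$ exactly as needed.
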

    \begin{proof}
By similar reasoning as in Lemma \ref{lemma:Ret_mainLemma} we have:
\begin{align}
    \mathbb{P}(k^+ = j) = \exp\Big(-\lambda \sum_{i=j+1}^k (p_{i,1}+p_{i,2})\Big)\big[1-\exp\big(-\lambda (p_{j,1}+p_{j,2})\big)\big] \quad \text{for: } j = 1,...,k.
\end{align}
For the aisle contributions we note that $T_i$ can be written as the sum of the contributions in the upper and lower block and also split the horizontal travel distance contribution in two, i.e.:
\begin{align}
    T_i = T_{i,1}+T_{i,2} =
    \sum_{j=1}^{N_{i,1}} P_j + \frac{l}{v}A_{i,1}+\frac{w_a}{v}\mathbbm{1}\{i < k^+\} + \sum_{j=1}^{N_{i,2}} P_j +  \frac{l}{v}A_{i,2} + \frac{w_a}{v} \mathbbm{1}\{i < k^+\}.
\end{align}
Observe that $T_{i,1},T_{i,2}$ are similar to $T_i$ in the single-block warehouse. In fact, they have the same distribution for $i<k^+$ when we take: $w_a/2,\lambda p_{i,j}$ and $l/2$ instead of $w_a,\lambda p_i$ and $l$. We can thus use the result in Lemma \ref{lemma:Ret_mainLemma} for the contribution of a single sub-aisle, resulting in \eqref{eq:Return2_Ti}. Again, these contributions are independent for $i<k^+$ and hence the transform of $T_{i,1}+T_{i,2}$ is the product of the transforms: $\psi_{T_i}(s) = \psi_{T_{i,1}}(s)\psi_{T_{i,2}}(s)$. For $i=k^+$ we, however, have to be careful, since either $N_{i,1}$ or $N_{i,2}$ can still be zero. For this aisle, we remark:
\begin{align*}
    \mathbb{E}\big[\exp(-sT_i)\big\vert i =k^+\big]
    &=\mathbb{E}\bigg[\exp\Big(-s\Big[-2\frac{w_a}{v}+T_{i,1}+T_{i,2}\Big]\Big)\bigg\vert i <k^+, N_i>0\bigg].
\end{align*}
Similar to \eqref{eq:Ret_Tk} we apply the law of total probability. By independence of the $T_{i,1}, T_{i,2}$ we now find:
\begin{align*}
     \mathbb{E}\big[\exp(-sT_i)\vert i =k^+\big] &= \frac{\exp(2w_as/v)\psi_{T_{i,1}}(s)\psi_{T_{i,2}}(s)-\exp(-\lambda(p_{i,1}+p_{i,2}))}{1-\exp(-\lambda(p_{i,1}+p_{i,2}))}.
\end{align*}
The denominator cancels against the term in $\mathbb{P}(k^+ = i)$ and we therefore find the result as in the theorem.
\end{proof}

\subsection{Storage policy examples}
\label{sec:Examples}
We apply these results for two different storage policies.
Firstly, we discuss the random storage policy, under which each item in an order is uniformly located amongst and within the aisles. Under this policy, we therefore have $p_i= 1/k$ and $F_i(x) = x$ for all $i$. Substituting this in Theorem \ref{thm:Return} results in the following expression:
\begin{lemma}
Under the random storage policy the LST of the total order picking time, for the single-block warehouse, is given by:
\begin{equation}
\label{eq:Return_Random}
\begin{aligned}
    \mathbb{E}[\exp(-sT)] = \exp(-\lambda)\bigg\{1+&\exp\Big(\frac{\lambda}{k}\Big)\Big[\exp\Big(\frac{2w_as}{v}\Big)\psi_{T_i}(s)-\exp\Big(-\frac{\lambda}{k}\Big)\Big] \frac{1-\exp(\lambda)\psi_{T_i}(s)^k}{1-\exp(\lambda/k)\psi_{T_i}(s)}\bigg\},
\end{aligned}
\end{equation}
where:
\begin{align}
\label{eq:Return_Random_Ti}
\psi_{T_i}(s) = \begin{aligned}[t]\exp&\Big(-\frac{\lambda}{k}-\frac{2w_as}{v}\Big)\cdot\bigg\{1+\frac{\Phi_P(s)\lambda v}{\Phi_P(s)\lambda v-2lks}\bigg[\exp\Big( \frac{\lambda}{k}\Phi_P(s)-\frac{2ls}{v}\Big) - 1\bigg]\bigg\}.
\end{aligned}
\end{align}
\end{lemma}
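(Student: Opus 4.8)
The plan is to substitute $p_i = 1/k$ and $F_i(x) = x$ directly into Lemma~\ref{lemma:Ret_mainLemma} and Theorem~\ref{thm:Return}, after which only two elementary simplifications remain: an exponential integral for the aisle transform and a finite geometric sum for the total transform.

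First I would derive \eqref{eq:Return_Random_Ti}. With $F_i(x) = x$ the integrand in \eqref{eq:Ret_TiLST} becomes $\exp\big((\lambda p_i\Phi_P(s) - \tfrac{2ls}{v})x\big)$, so the integral is $\int_0^1 e^{ax}\dx x = (e^{a}-1)/a$ with $a := \tfrac{\lambda}{k}\Phi_P(s) - \tfrac{2ls}{v}$. Writing the remaining term $\exp(\lambda p_i\Phi_P(s) - \tfrac{2ls}{v}) = e^{a}$ and using $e^{a} = 1 + (e^{a}-1)$, the bracket in \eqref{eq:Ret_TiLST} equals $1 + (e^{a}-1)\big(1 + \tfrac{2ls}{va}\big) = 1 + (e^{a}-1)\tfrac{va + 2ls}{va}$. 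Since $va = \tfrac{\lambda v}{k}\Phi_P(s) - 2ls$, we have $va + 2ls = \tfrac{\lambda v}{k}\Phi_P(s)$, and multiplying numerator and denominator by $k$ turns $\tfrac{va+2ls}{va}$ into the factor $\tfrac{\Phi_P(s)\lambda v}{\Phi_P(s)\lambda v - 2lks}$ appearing in \eqref{eq:Return_Random_Ti}; the prefactor $\exp(-\lambda p_i - \tfrac{2w_as}{v})$ becomes $\exp(-\tfrac{\lambda}{k} - \tfrac{2w_as}{v})$.

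Next I would plug $p_i = 1/k$ into \eqref{eq:Return_LST_T}. By symmetry of the random policy all $\psi_{T_l}(s)$ coincide; writing $\psi := \psi_{T_i}(s)$ we get $\prod_{l=1}^{i-1}\psi_{T_l}(s) = \psi^{i-1}$ and $\sum_{m=i+1}^k p_m = (k-i)/k$. Pulling the common factor $\exp(\tfrac{2w_as}{v})\psi - \exp(-\tfrac{\lambda}{k})$ out of the sum leaves $\sum_{i=1}^k \exp(-\lambda(k-i)/k)\,\psi^{i-1} = e^{-\lambda}\sum_{i=1}^k (e^{\lambda/k})^i\psi^{i-1} = e^{-\lambda}e^{\lambda/k}\sum_{i=1}^k r^{i-1}$ with $r := e^{\lambda/k}\psi$, which is the geometric series $e^{-\lambda}e^{\lambda/k}\tfrac{1-r^k}{1-r}$. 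Using $r^k = e^{\lambda}\psi^k$ and reinstating the stand‑alone $e^{-\lambda}$ term from \eqref{eq:Return_LST_T}, then factoring out $e^{-\lambda}$, yields exactly \eqref{eq:Return_Random}.

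Nothing in this is genuinely hard; the only points needing care are the bookkeeping of exponents when rewriting $\exp(-\lambda(k-i)/k)$ as $e^{-\lambda}e^{\lambda i/k}$ so that the geometric ratio is cleanly $e^{\lambda/k}\psi$, and a remark that \eqref{eq:Return_Random}–\eqref{eq:Return_Random_Ti} are to be read in the limiting (l'Hôpital) sense at the isolated $s$ where a denominator vanishes, i.e. $\Phi_P(s)\lambda v = 2lks$ or $e^{\lambda/k}\psi_{T_i}(s) = 1$.
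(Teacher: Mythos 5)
Your proposal is correct and follows essentially the same route as the paper: substitute $p_i=1/k$, $F_i(x)=x$ into \eqref{eq:Ret_TiLST} and \eqref{eq:Return_LST_T}, evaluate the now-elementary exponential integral to get \eqref{eq:Return_Random_Ti}, and collapse the sum over the last aisle into a geometric series to get \eqref{eq:Return_Random}; the algebra checks out. Your closing remark on reading the formulas in the limiting sense at the removable singularities ($\Phi_P(s)\lambda v=2lks$ or $e^{\lambda/k}\psi_{T_i}(s)=1$) is a small, correct addition that the paper leaves implicit.
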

\begin{proof}
Using $p_i = 1/k, F_i(x) = x$ in Theorem \ref{thm:Return} gives:
\begin{align*}
    \psi_{T_i}(s) &=\exp\Big(-\frac{\lambda}{k}-\frac{2w_as}{v}\Big)\bigg[
        \exp\Big(\frac{\lambda}{k} \Phi_P(s)-\frac{2ls}{v}\Big)+\frac{2ls}{v}\int_{x=0}^1 \exp\Big(-\frac{2ls}{v}x+\frac{\lambda}{k}\Phi_P(s)x\Big)\dx x\bigg]\\
        &=\exp\Big(-\frac{\lambda}{k}-\frac{2w_as}{v}\Big)\bigg[
        \exp\Big(\frac{\lambda}{k} \Phi_P(s)-\frac{2ls}{v}\Big)+\frac{2lks}{\Phi_P(s)\lambda v-2lks}\Big[\exp\Big(\frac{\lambda}{k} \Phi_P(s)-\frac{2ls}{v}\Big)-1\Big]\bigg],
\end{align*}
which results in \eqref{eq:Return_Random_Ti} by adding the terms. Next turning to \eqref{eq:Return_LST_T} we use that $\psi_{T_i}(\cdot)$ is the same for all $i$ and hence:
\begin{align}
    \mathbb{E}[\exp(-sT)] &= \exp(-\lambda) + \sum_{j=1}^{k} \Big[\exp\Big(\frac{2w_as}{v}\Big)\psi_{T_i}(s)-\exp\Big(-\frac{\lambda}{k}\Big)\Big]\exp\Big(-\frac{\lambda (k-j)}{k}\Big)\psi_{T_i}(s)^{j-1}\nonumber\\
    \label{eq:Return_Random_eq}
    &=\exp(-\lambda) + \Big[\exp\Big(\frac{2w_as}{v}\Big)\psi_{T_i}(s)-\exp\Big(-\frac{\lambda}{k}\Big)\Big]\exp\Big(-\frac{\lambda (k-1)}{k}\Big)\sum_{j=0}^{k-1}\Big[\exp\Big(\frac{\lambda}{k}\Big)\psi_{T_i}(s)\Big]^{j}.
\end{align}
Rewriting the geometric sum in \eqref{eq:Return_Random_eq} yields \eqref{eq:Return_Random}.
\end{proof}

\textbf{Note:} From this equation one can find the first moment of $T$ by evaluating the derivative of the LST at $0$. The resulting expectation coincides with the results in \citet{Engels2022}:
\begin{align*}
     \mathbb{E}[T] &= \frac{2lk^2}{v\lambda}\Big[1-\exp\Big(-\frac{\lambda}{k}\Big)\Big] + \frac{2w_a}{v}\Big[k-\frac{1-\exp(-\lambda)}{1-\exp(-\lambda/k)}\Big].
\end{align*}

In the 2-block warehouse the results are very similar: relation \eqref{eq:Return_Random} still holds, yet now we have $p_{i,j} = 1/(2k)$. Consequently, $\psi_{T_i}(\cdot)$ is given by the square of the expression in \eqref{eq:Return_Random_Ti} with $k$ replaced by $2k$ and $w_a$ replaced by $w_a/2$. The square comes from the fact that $\psi_{T_{i,1}}(\cdot)$ and $\psi_{T_{i,2}}(\cdot)$ are equal.\\

In practice, warehouses often exploit knowledge on the demand of items. For example, storing high-demand items in close locations results in lower order picking times. One well-known example of such a storage policy is the class-based policy, see e.g. \citep{Dijkstra2017}.

Under the class based storage policy, each item is grouped in a certain demand class. Each of these classes has a separate area within the warehouse in which the corresponding items are located uniformly at random. Under this policy, one can thus prevent long routes by placing high demand items (high class) close to the cross-aisles.

Below we provide a mathematical definition for class-based storage.
% However, in practice, it has been shown that a total of three classes is sufficient to obtain near optimal performance \citep{DeKoster2007, Rao2013}.
We follow the notation and definition in \citet{Dijkstra2017} and assume that there are a total of $Q$ classes and write $\tilde{p}_q$ for the probability that an arbitrary item in an order is of class $q$. We consider the case where each aisle consists of $Q$ different ranges, one for each class. Let $u_q^{(i)}$ be the upper-bound of the locations of type $q$ items in aisle $i$, such that all class $q$ items in aisle $i$ fall between $u_{q-1}^{(i)}$ and $u_{q}^{(i)}$. Here we use the notion that $u_0^{(i)} = 0$ and $u_Q^{(i)}=1$. We thus have that type $q$ items make up a total of $f_q$ space, where $f_q$ is given by: $f_q:=\sum_{i=1}^k \big(u_q^{(i)} - u_{q-1}^{(i)}\big)$.

Although the definition above is a special case of the class-based storage, it is known to provide optimal solutions, w.r.t. the average route-length, for return routing when $\tilde{p}_1>\tilde{p}_2>...>\tilde{p}_Q$ \citep{Dijkstra2017}.

In case of 2 classes we can translate this to our model by noting:
\begin{align*}
    p_i &=  \sum_{q=1}^2 \mathbb{P}(\text{item of class }q)\mathbb{P}(\text{item in aisle }i\vert \text{item of class } q)=\tilde{p}_1\cdot \frac{u_1^{(i)}}{f_1} + \tilde{p}_2\cdot \frac{1-u_1^{(i)}}{f_2},\\
    F_i(x) &=\sum_{q=1}^2\mathbb{P}(\text{item of class }q\vert \text{item in aisle }i)\mathbb{P}(\text{Location in aisle $i$ before } x\vert \text{item of class }q)\\
    &=\begin{dcases}
    \frac{\tilde{p}_1}{p_i}\frac{u_1^{(i)}}{f_1}\cdot \frac{x}{u_1^{(i)}} = \frac{\tilde{p}_1}{p_i}\frac{x}{f_1} &\text{if } x< u_1^{(i)},\\
    \frac{\tilde{p}_1}{p_i}\frac{u_1^{(i)}}{f_1} + \frac{\tilde{p}_2}{p_i}\frac{x-u_1^{(i)}}{f_2} & \text{if } x \geq u_1^{(i)}.
    \end{dcases}
\end{align*}
One can substitute these relations into Theorem \ref{thm:Return} yielding the LST of the total order picking time. In fact, one can analytically find the integral in \eqref{eq:Ret_TiLST} by splitting the integral in two parts, one for each class. The case for a general number of classes follows similarly.\\

\textbf{Remark:} This class-based storage model is a continuous version of the model in \citet{Dijkstra2017}. By letting the number of locations in \citet{Dijkstra2017} go to infinity, and appropriately scaling the probabilities, the model becomes equivalent to the one described here.

\section{Numerical Illustration}
\label{sec:NumRes}
In this section we focus on the comparison of routing and storage policies. We do this by numeric inversion of LST's using the method of Abate and Whitt \citep{Abate1992}.

We consider a warehouse model with $l=20m,w_a = 2.5m, v= 0.83m/s$ and let $P\sim\mathrm{Exp}(1/5)$. For the class-based storage policy, we consider the example in \citet{Dijkstra2017} for a $50/30/20$ distribution of classes and a $20/30/50\%$ of space required per class, where $k = 15$ and $\lambda = 10$. We take the class-based areas as graphically given in \citet{Dijkstra2017}. For the 2-block warehouse we assign the same area division to the lower block and mirror the areas in the upper block, see the illustration in Figure \ref{fig:StorageIllustration}; the various colours indicate the regions in which the items of the corresponding class are stored. Besides the class-based storage, we also consider random storage. The picking time densities for these storage policies are shown in Figure \ref{fig:Storage}.\\
\begin{figure}[h]
    \centering
    \begin{subfigure}{0.45\textwidth}
        \includegraphics[width = \textwidth]{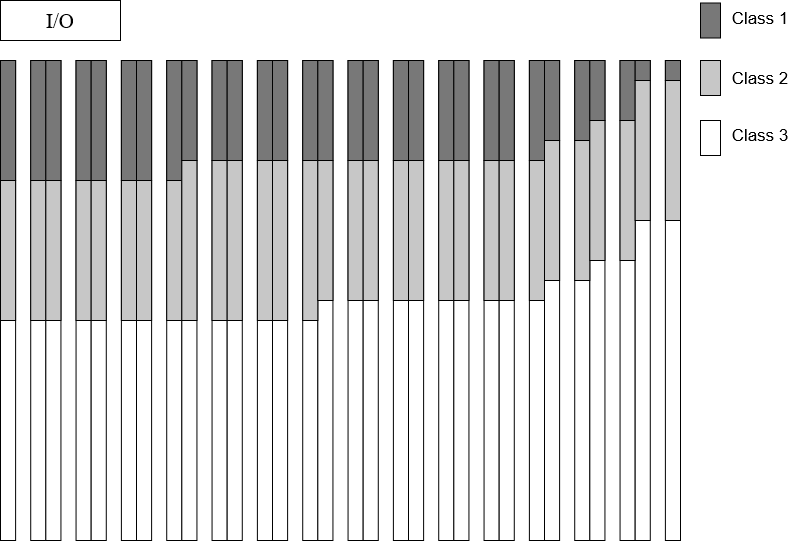}
        \caption{Single-block warehouse.}
    \end{subfigure}\hspace{0.05\textwidth}
    \begin{subfigure}{0.45\textwidth}
        \includegraphics[width = \textwidth]{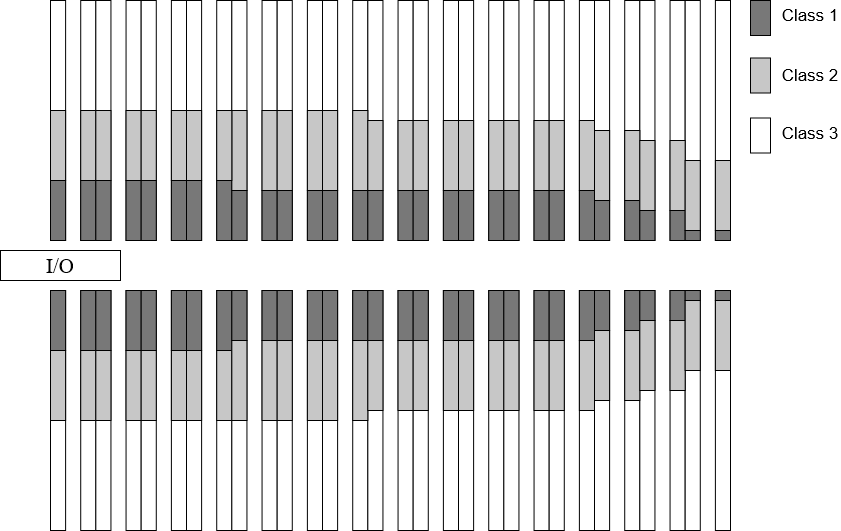}
        \caption{2-block warehouse.}
    \end{subfigure}
    \caption{Illustrations of the class-based storage areas for the numerical example.}
    \label{fig:StorageIllustration}
\end{figure}

\begin{figure}[h]
    \centering
    \includegraphics[width = 10cm]{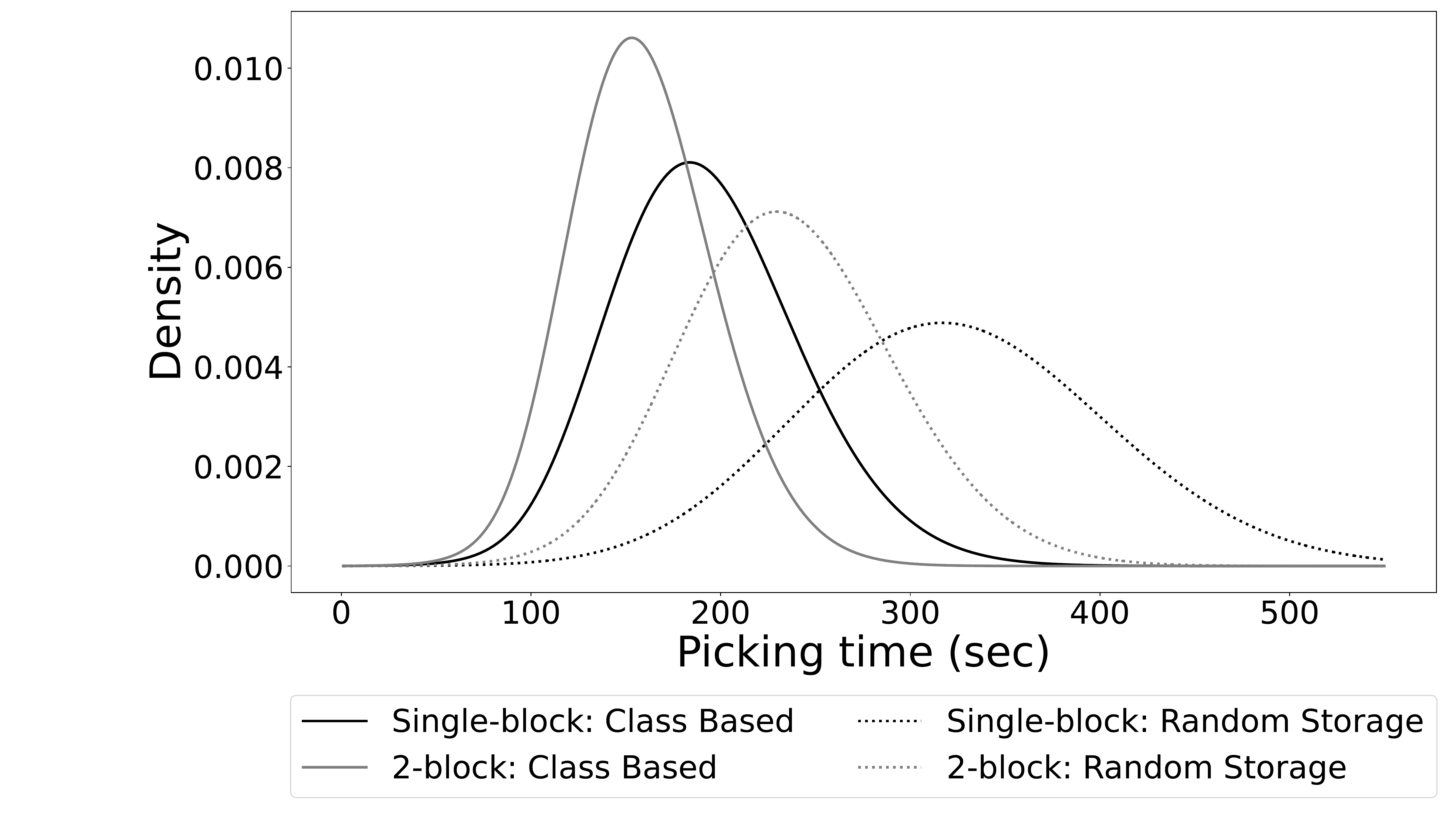}
    \caption{Illustration of the density of the total order picking time under the Class-Based Storage and Random Storage policies.}
    \label{fig:Storage}
\end{figure}

We can see that the single-block warehouse is outperformed by the multi-block warehouse and the class-based storage, in turn, outperforms the random storage policy. Similar comparisons have previously been found in literature, yet the quantification that is allowed by knowing the complete distribution is new. For example the plots reveal a similarity to the normal density. This similarity can be explained by relation \eqref{eq:Ret_T_1}, showing that the order total picking time is a random sum of independent random variables. By the Central Limit Theorem for random sums of independent random variables we thus know that the total order picking time distribution will converge to a normal distribution when both $k$ and $\lambda$ go to infinity at the same rate \citep{Shantikumar}.\\

\section{Conclusion}
In this paper we have derived exact formulas for the Laplace-Stieltjes Transform of the total order picking time under return routing for a number of the most important storage policies, of which we discussed two in detail. The formulas in this paper allow one to exactly determine the order picking time distribution and can therefore be used to gain a deeper insight in the order picking problem, as is highlighted in the numerical results.

The results in this paper exploit the independence between the aisles, which is a result of the assumption that the order sizes follows a Poisson distribution. While restrictive, these results can also be used to approximate the order picking time distribution when the order size follows a mixed Poisson distribution, by conditioning over the mixing parameter and approximating the integral. This includes the case of a negative binomial distribution (mixed Poisson in which the Poisson rate is Gamma distributed) or the case where the rate of the Poisson process (for the demand) changes over time.

This paper restricts the derivation to return routing in single- and 2-block warehouses, but forms a foundation for the derivation of the order picking time LST for multi-block warehouses or other routing policies. In particular, the results can be generalized for midpoint, largest-gap and S-shaped routing in single-block warehouses deploying the random storage policy. For more advanced storage policies, the formulas quickly tend to become computationally involved. Furthermore, warehouses with multiple blocks can be considered. Yet, efficient routing policy definitions in multi-block warehouses are scarce and often involve many steps and cases to take into account \citep[see e.g.][]{Roodbergen2008}.

Finally, we emphasize that the results of this paper can be used for a large range of performance statistics. E.g., they can be used to obtain good bounds on the tail probabilities of the order picking time, as well as approximations of the order-lead time distribution, viewing the order picking time as service time in a multi-server queue with $c$ pickers and applying a result of \citet{VanHoorn1982}. As a consequence, this paper allows for a more direct analysis of statistics in warehouses, which enables a better comparison of warehouse storage policies.

\section*{Acknowledgements}
The research of Tim Engels and Onno Boxma is partly funded by the NWO Gravitation project NETWORKS, grant number 024.002.003.

%% The Appendices part is started with the command \appendix;
%% appendix sections are then done as normal sections
%% \appendix

%% If you have bibdatabase file and want bibtex to generate the
%% bibitems, please use
%%
%%

\bibliography{OR_letters_revisit.bib}

\end{document}